\documentclass[a4paper, final]{article}
\usepackage{amsfonts, amssymb, amsthm, amsmath}
\usepackage[utf8]{inputenc}
\usepackage[english]{babel}
\usepackage{graphicx}
\usepackage[algoruled]{algorithm2e}
\usepackage{booktabs}
\usepackage{rotating} 
\usepackage{hyperref}
\usepackage{fontawesome}
\usepackage{enumerate}

\newcommand{\natbasis}{\overline\phi}
\newcommand{\R}{\mathbb{R}}

\newcommand{\norm}[1]{\left\lVert#1\right\lVert}
\newcommand{\abs}[1]{\left\lvert#1\right\lvert}
\newcommand{\Pol}[2]{\mathcal{P}_{#2}^{#1}(\mathbb{R})}

\newtheorem{theorem}{Theorem}[section]
\newtheorem{lemma}[theorem]{Lemma}

\newtheorem{assumption}[theorem]{Assumption}
\newtheorem{remark}[theorem]{Remark}
\newtheorem{defn}[theorem]{Definition}

\bibliographystyle{abbrv}

\title{On complexity constants of linear and quadratic models for derivative-free trust-region algorithms\footnote{This study was financed in part by the Coordenação de Aperfeiçoamento de Pessoal de Nível Superior – Brasil (CAPES) – Finance Code 001}} %
\author{ %
    \href{https://orcid.org/0000-0003-4660-4611}{\includegraphics[scale=0.6]{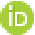}}
    A. E. Schwertner\footnote{Graduate Program in Mathematics, State University of Maringá, Paraná, Brazil} %
    \and
    \href{https://orcid.org/0000-0003-4963-0946}{\includegraphics[scale=0.6]{images/orcid_logo.eps}}
    F. N. C. Sobral\footnote{Department of Mathematics, State University
    of Maringá, Paraná, Brazil}
    \footnote{{\faEnvelopeO}~Corresponding author,
    \texttt{fncsobral@uem.br}}%
}

\begin{document}

\maketitle
	
\begin{abstract}
  Complexity analysis has become an important tool in the convergence analysis of optimization algorithms. For derivative-free optimization algorithms, it is not different. Interestingly, several constants that appear when developing complexity results hide the dimensions of the problem. This work organizes several results in
  literature about bounds that appear in derivative-free trust-region algorithms based on linear and quadratic models. All the constants are given explicitly by the quality of the sample set, dimension of the problem and number of sample points. We extend some results to allow ``inexact'' interpolation sets. We also provide a clearer proof than those already existing in literature for the underdetermined case.
\end{abstract}

\noindent
\textbf{Keywords}: Polynomial interpolation, Derivative-free trust-region algorithms, Minimum Frobenius norm, Error bounds

\section{Introduction}\label{sec:introduction}

Polynomial interpolation is one of the main aspects of model-based derivative-free algorithms. Given a function $f: \R^n \to \R$ and a set of interpolation points $\mathcal{Y} = \{y^0, \dots, y^p\}$, the interpolating polynomial $\mathrm{m} \in \Pol{a}{n}$ is such that
\begin{equation} \label{eq_interpolation} %
\mathrm{m}(y^i) = f(y^i),\quad i = 0, \dots, p,
\end{equation}
where $\Pol{a}{n}$ is the space of polynomials of degree at most $a$ in $n$ variables. It is well understood that quadratic polynomials ($a = 2$) provides a good approximation, being able to capture the curvature of $f$ using a reasonable amount of interpolation points~\cite[p.35]{CSV2009}.

More recently, the wort-case complexity analysis of optimization algorithms became popular. Although most of the bounds obtained are very pessimistic, the analysis provide further insights into the parameters and the dependence on the problems' dimensions.

When studying worst-case complexity of derivative-free trust-region algorithms using linear or quadratic polynomials, one has to deal with bounds related to the geometric quality of $\mathcal{Y}$ and to the Hessian of the model (in the quadratic case). In~\cite{CSV2009}, several bounds were provided, which explicitly show the dependence on the problems' dimensions. Such bounds were used, for example, in the excellent complexity analysis of~\cite{CR2019, GJV2016}. The complexity bounds generated the desire of performing derivative-free optimization in smaller subspaces, to further decrease the impact of dimensionality in the construction of models~\cite{CR2021}.

In~\cite{VKPS2017}, some error bounds were obtained for a relaxed
version of~\eqref{eq_interpolation}
\begin{equation} \label{eq_r_interpolation} %
|\mathrm{m}(y^i) - f(y^i)| \le \kappa \delta^2,
\end{equation}
where $\kappa$ is a constant and $\delta$ can be viewed as the
precision of the model~\cite{P2015,VKPS2017} or just as the
trust-region
radius~\cite{CSV2009}. Condition~\eqref{eq_r_interpolation} is related
to linear and quadratic models build from support vector regression
strategies. Unfortunately, only the determined case is analyzed
in~\cite{VKPS2017}, that is, the case where $p = dim(\Pol{a}{n}) -
1$. In the case of quadratic models, that would require $O(n^2)$
interpolation points in $\mathcal{Y}$. This work intends to extend the
bounds obtained in~\cite{VKPS2017} to the underdetermined case, which
is known to have better performance in practical implementations.

The main contributions of the present work are
\begin{itemize}
    \item the bounds associated with linear and quadratic polynomial models are organized, making clear their dependence on the dimension and quality of the interpolation set;
    \item the bounds from~\cite{VKPS2017} are extended to the underdetermined case.
\end{itemize}

This work is organized as follows.

Section~\ref{sec:determined} is concerned with bounds to linear and
quadratic polynomial models which are uniquely determined by the
sample set. Section~\ref{sec:underdetermined} is the main
contribution of this work and discusses error bounds for
underdetermined quadratic models, since they are the most efficient
models in practice~\cite{P2006}. Finally, in
Section~\ref{sec:conclusions} we state and organize all the bounds and
discuss possible open questions.

Throughout this work, we will refer to the interpolating polynomial $\mathrm{m}$ as the one satisfying the relaxed condition~(\ref{eq_r_interpolation}), $\overline{B}(x, \delta)$ is the closed ball centered at $x$ with radius $\delta$, $\norm{\cdot}$ is the Euclidean vector norm or its respective induced norm for matrices, $\mathcal{Y} = \{y^0, \dots, y^p\}$ is the set of $p + 1$ sample points, for which $f$ is assumed to be known, $\Pol{a}{n}$ is the space of polynomials of degree at most $a$ in $n$ variables, and $q = dim(\Pol{a}{n}) - 1$.

\section{Determined models} \label{sec:determined}

Let us assume that $\mathcal{Y} \subset \overline{B}(y^{0}, \delta)$. Determined interpolation models need that the number of sample points in $\mathcal{Y}$ equals the dimension of $\Pol{a}{n}$, that is, $p = q$ in our notation. In the linear case, we assume that $p = n = q$ and define matrices
\begin{equation}\label{eq_def_matriz_linear}
    \mathbf{L}_{L} = \left[\begin{array}{c}
            (y^{1} - y^{0})^{T} \\
            \vdots              \\
            (y^{n} - y^{0})^{T}
            \end{array}\right] = 
            \left[\begin{array}{ccc}
            y_{1}^{1} - y_{1}^{0} & \dots  & y_{n}^{1} - y_{n}^{0} \\
            \vdots                & \ddots & \vdots                \\
            y_{1}^{n} - y_{1}^{0} & \dots  & y_{n}^{n} - y_{n}^{0}
            \end{array}\right],
\end{equation}
and
\begin{equation}\label{eq_def_matriz_linear_escala}
    \hat{\mathbf{L}}_{L} = \frac{1}{\delta}\mathbf{L}_{L}.
\end{equation}
In the quadratic case, we assume $p = (n^{2} + 3n) / 2 = q$ and define the matrix
\begin{equation}\label{def_matriz_quadratica}
    \mathbf{L}_{Q} = \left[\begin{array}{c}
            \big(\overline{\varphi}(y^{1} - y^{0})\big)^{T} \\
            \vdots              \\
            \big(\overline{\varphi}(y^{q} - y^{0})\big)^{T}
            \end{array}\right],
\end{equation}
where
$\overline{\varphi}(x) = \bigg[x_{1}, x_{2}, \dots, x_{n},
\frac{1}{2}x_{1}^{2}, x_{1}x_{2}, x_{1}x_{3}, \dots, x_{1}x_{n},
\frac{1}{2}x_{2}^{2}, \dots, x_{n-1}x_{n}, \frac{1}{2}x_{n}^{2}
\bigg]^{T}$ is the vector whose elements form the natural basis of
monomials in $\Pol{2}{n}$, as defined in \cite[Section
3.1]{CSV2009}. We also consider the matrix
\begin{equation}\label{eq_def_matriz_quadratica_escala}
    \hat{\mathbf{L}}_{Q} = \mathbf{L}_{Q}
        \left[\begin{array}{cc}
        \mathbf{D}_{L}^{-1} & 0          \\
        0          & \mathbf{D}_{Q}^{-1},
        \end{array}\right]
\end{equation}
where $\mathbf{D}_{L} = \delta \mathbf{I}_{n \times n}$ e
$\mathbf{D}_{Q} = \delta^{2} \mathbf{I}_{(q - n) \times (q -
  n)}$. Matrices $\mathbf{L}_L$ and $\mathbf{L}_Q$ are related with the construction of
determined interpolation models,
while~\eqref{eq_def_matriz_linear_escala}
and~\eqref{eq_def_matriz_quadratica_escala} are their respective
scaled versions, mainly used for theoretical purposes.

We will assume that the following assumptions are valid.

\begin{assumption} \label{hip_f_diferenciavel}
$\nabla f$ is Lipschitz continuous with constant $L$ in a sufficiently large open bounded domain $\mathcal{X}\subset \mathbb{R}^{n}$.
\end{assumption}

\begin{assumption} \label{hip_f_limitada}
$f$ is bounded below.
\end{assumption}

\begin{assumption}\label{hip_L_invertivel_limitada}
If the model to be built is linear, then the matrix $\mathbf{L}_{L}$ is non-singular and there is a constant $\kappa_{L} > 0$ such that $\norm{\hat{\mathbf {L}}_{L}^{-1}} \leq \kappa_{L}$. If the model is quadratic, then the matrix $\mathbf{L}_{Q}$ is nonsingular and there is a constant $\kappa_{Q} > 0$ such that $\norm{\hat{\mathbf{L} }_{Q}^{-1}} \leq \kappa_{Q}$.
\end{assumption}

The next assumption is related to the relaxed concept of interpolation
models considered in this work.

\begin{assumption}\label{hip_dif_limitada}
There is a constant $\kappa \geq 0$ such that for every $y^{j} \in \mathcal{Y} \subset \overline{B}(y ^{0}, \delta)$
\begin{equation*}
    \abs{\mathrm{m}(y^{j}) - f{(y^{j})}} \leq \kappa \delta^{2}.
\end{equation*}
\end{assumption}

Note that Assumption~\ref{hip_dif_limitada} includes the creation of ``exact'' interpolation models and also models under uncertainty~\cite{VKPS2017}, under numerical errors or even when we can control the precision in which $f$ is calculated~\cite{BKM2022}. The next theorem provides the error bounds for determined polynomial interpolation under Assumption~\ref{hip_dif_limitada}. Its proof is provided in~\cite{VKPS2017}.

\begin{theorem}\label{teorema_limitantes_linear_quadratico}
Suppose that Assumptions \ref{hip_f_diferenciavel}, \ref{hip_f_limitada}, \ref{hip_L_invertivel_limitada} and \ref{hip_dif_limitada} hold. Then, for all $x \in \mathcal{X} \cap \overline{B}(y^{0}, \delta)$, if the model is linear
\begin{align*}
  \norm{\nabla^{2}\mathrm{m}(x)} & = 0,
  \\ 
  \norm{\nabla f(x) - \nabla \mathrm{m}(x)} &\leq  \left(L +  \left( \frac{1}{2} L + 2 \kappa \right) \kappa_L \sqrt{n} \right) \delta,
  \\ 
        \abs{f(x) - \mathrm{m}(x)} &\leq \left( \frac{1}{2} L + \kappa + \left( \frac{1}{2} L + 2 \kappa \right)  \kappa_L \sqrt{n} \right) \delta^{2},
\end{align*}
and, if the model is quadratic,
\begin{align*}
        \norm{\nabla^{2}\mathrm{m}(x)} & \leq 2 \kappa_Q \sqrt{2q} (\kappa + L), \\ 
        \norm{\nabla f(x) - \nabla \mathrm{m}(x)} &\leq  \left(2 \kappa_Q \sqrt{q} \left(1 + \sqrt{2} \right) (\kappa + L) \right) \delta, \\ 
        \abs{f(x) - \mathrm{m}(x)} &\leq \left(\frac{1}{2} L + \kappa + \kappa_Q \sqrt{q} \left(2 + 3\sqrt{2} \right) (\kappa + L) \right) \delta^{2}.
\end{align*}
\end{theorem}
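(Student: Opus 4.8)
The plan is to treat the linear and quadratic cases separately, in each case first bounding the error in the model's gradient (and, in the quadratic case, Hessian), and then propagating those into the function-value error via a Taylor-type argument. For the linear case, write $\mathrm{m}(x) = c + g^T(x - y^0)$, so that $\nabla^2 \mathrm{m} \equiv 0$ is immediate. The interpolation-type conditions \eqref{eq_r_interpolation} at each $y^j$, subtracted from the one at $y^0$, give $g^T(y^j - y^0) = f(y^j) - f(y^0) + e_j$ with $|e_j| \le 2\kappa\delta^2$; stacking these yields $\mathbf{L}_L\, g = b$ where $b_j = f(y^j) - f(y^0) + e_j$. The key is to compare $g$ with $\nabla f(y^0)$: by Lipschitz continuity of $\nabla f$ (Assumption \ref{hip_f_diferenciavel}), $f(y^j) - f(y^0) = \nabla f(y^0)^T(y^j - y^0) + r_j$ with $|r_j| \le \tfrac{1}{2}L\|y^j - y^0\|^2 \le \tfrac12 L\delta^2$, so $\mathbf{L}_L(g - \nabla f(y^0))$ has entries bounded by $(\tfrac12 L + 2\kappa)\delta^2$ in absolute value, hence Euclidean norm at most $(\tfrac12 L + 2\kappa)\delta^2\sqrt{n}$. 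Multiplying by $\mathbf{L}_L^{-1} = \tfrac{1}{\delta}\hat{\mathbf{L}}_L^{-1}$ and using $\|\hat{\mathbf{L}}_L^{-1}\| \le \kappa_L$ (Assumption \ref{hip_L_invertivel_limitada}) gives $\|g - \nabla f(y^0)\| \le (\tfrac12 L + 2\kappa)\kappa_L\sqrt{n}\,\delta$. Then for any $x \in \overline{B}(y^0,\delta)$, $\nabla \mathrm{m}(x) = g$ and $\|\nabla f(x) - g\| \le \|\nabla f(x) - \nabla f(y^0)\| + \|\nabla f(y^0) - g\| \le L\delta + (\tfrac12 L + 2\kappa)\kappa_L\sqrt{n}\,\delta$, which is the stated gradient bound. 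For the function-value bound, write $f(x) - \mathrm{m}(x) = \big(f(x) - f(y^0) - \nabla f(y^0)^T(x-y^0)\big) + \big(\nabla f(y^0) - g\big)^T(x - y^0) - \big(\mathrm{m}(y^0) - f(y^0)\big)$; the three terms are bounded by $\tfrac12 L\delta^2$, $(\tfrac12 L + 2\kappa)\kappa_L\sqrt{n}\,\delta^2$, and $\kappa\delta^2$ respectively, giving the claim.

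For the quadratic case, write $\mathrm{m}(x) = c + g^T(x - y^0) + \tfrac12(x - y^0)^T H (x - y^0)$, so that $\overline\varphi(y^j - y^0)^T w = \mathrm{m}(y^j) - \mathrm{m}(y^0)$ where $w$ collects $g$ and the entries of $H$ (matching the monomial ordering in $\overline\varphi$). Stacking over $j = 1,\dots,q$ gives $\mathbf{L}_Q\, w = b$ with $b_j = \mathrm{m}(y^j) - \mathrm{m}(y^0)$, and by Assumption \ref{hip_dif_limitada} each $b_j$ is within $2\kappa\delta^2$ of $f(y^j) - f(y^0)$. The natural comparison object is the second-order Taylor expansion of $f$ at $y^0$: set $w^\ast$ to be the coefficient vector built from $\nabla f(y^0)$ and $\nabla^2 f(y^0)$; then $\mathbf{L}_Q w^\ast$ has $j$-th entry $f(y^j) - f(y^0)$ up to a Taylor remainder bounded by (a constant times) $L\delta^2$ using Lipschitz continuity of $\nabla f$. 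Hence $\|\mathbf{L}_Q(w - w^\ast)\| \le \sqrt{q}\,(\text{const})(\kappa + L)\delta^2$, and solving via $\mathbf{L}_Q^{-1} = \hat{\mathbf{L}}_Q^{-1} \mathbf{D}^{-1}$ (with $\mathbf{D} = \mathrm{diag}(\mathbf{D}_L, \mathbf{D}_Q)$) and $\|\hat{\mathbf{L}}_Q^{-1}\| \le \kappa_Q$ controls $\|\mathbf{D}(w - w^\ast)\|$, i.e. $\delta\|g - \nabla f(y^0)\|$ together with $\delta^2$ times a norm of $H - \nabla^2 f(y^0)$. The subtlety here is bookkeeping between the $\ell_2$-norm of the coefficient vector of a symmetric matrix and the operator norm of that matrix: the $\tfrac12$-weighting of the diagonal monomials in $\overline\varphi$ means $\|\text{vec}(A)\|_2$ and $\|A\|$ differ by factors involving $\sqrt 2$, and tracking these carefully is what produces the $\sqrt 2$, $(1+\sqrt2)$, $(2 + 3\sqrt2)$ constants in the statement. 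One gets $\|H\| \le \|H - \nabla^2 f(y^0)\| + \|\nabla^2 f(y^0)\| $ and $\|\nabla^2 f(y^0)\| \le L$, yielding the Hessian bound $\|\nabla^2\mathrm{m}(x)\| = \|H\| \le 2\kappa_Q\sqrt{2q}(\kappa + L)$ after the norm conversions.

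Finally, the gradient and function-value bounds for the quadratic model follow by the same propagation as in the linear case but now carrying the Hessian term: $\nabla \mathrm{m}(x) = g + H(x - y^0)$, so $\|\nabla f(x) - \nabla\mathrm{m}(x)\| \le \|\nabla f(x) - \nabla f(y^0) - \nabla^2 f(y^0)(x - y^0)\| + \|(\nabla^2 f(y^0) - H)(x-y^0)\| + \|g - \nabla f(y^0)\|$, the first term bounded by (const)$\,L\delta$, the second by $\|H - \nabla^2 f(y^0)\|\delta$, and the third by the gradient-coefficient estimate above; collecting the constants gives $2\kappa_Q\sqrt q(1+\sqrt2)(\kappa+L)\delta$. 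The function-value estimate adds one more layer: $f(x) - \mathrm{m}(x)$ splits into the second-order Taylor remainder of $f$ (bounded by $\tfrac12 L\delta^2$, since Lipschitz $\nabla f$ gives a quadratic remainder), the $\big(\nabla f(y^0) - g\big)^T(x-y^0)$ term, the $\tfrac12(x-y^0)^T(\nabla^2 f(y^0) - H)(x-y^0)$ term, and the $\mathrm{m}(y^0) - f(y^0)$ term bounded by $\kappa\delta^2$; summing the coefficients yields $\big(\tfrac12 L + \kappa + \kappa_Q\sqrt q(2 + 3\sqrt2)(\kappa + L)\big)\delta^2$. I expect the main obstacle to be precisely the norm-conversion bookkeeping in the quadratic case — relating $\|\hat{\mathbf{L}}_Q^{-1}\|$ applied to a residual vector back to separate operator-norm bounds on the gradient error and the Hessian error, while keeping the $\sqrt 2$ factors from the $\overline\varphi$ normalization consistent — since a careless pass there would miss exactly the constants the theorem is advertising.
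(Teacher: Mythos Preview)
Your plan is sound and matches the standard argument in the literature; note that the paper itself does not reproduce a proof of this theorem but simply refers to~\cite{VKPS2017}, so there is no detailed in-paper proof to compare against. The linear case in your outline is complete and yields exactly the stated constants.

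One technical point in the quadratic case deserves attention. Your comparison vector $w^{\ast}$ is built from $\nabla f(y^{0})$ and $\nabla^{2} f(y^{0})$, and later you bound $\norm{\nabla f(x) - \nabla f(y^{0}) - \nabla^{2} f(y^{0})(x - y^{0})}$. However, Assumption~\ref{hip_f_diferenciavel} only asserts that $\nabla f$ is Lipschitz; it does not guarantee that $\nabla^{2} f(y^{0})$ exists. The usual fix is either to take $w^{\ast}$ with zero quadratic part (i.e.\ compare against the first-order Taylor polynomial of $f$ at $y^{0}$, whose remainder is still controlled by $\tfrac{1}{2}L\delta^{2}$) and then bound $\norm{H}$ directly from $\norm{\mathbf{D}(w - w^{\ast})}$, or to invoke an additional twice-differentiability hypothesis as is implicitly done in parts of the cited literature. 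Either route recovers the stated bounds after the $\sqrt{2}$ bookkeeping you already anticipate; just be aware that as written your argument uses an object the assumptions do not supply.
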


In order to ensure Assumption \ref{hip_L_invertivel_limitada}, we need to make additional hypotheses about the geometry of the sample set. The existence and uniqueness of model $\textrm{m}$, also known as \emph{poisedness}, is not enough. In linear interpolation, for example, this can be achieved by assuming
that the points in $\mathcal{Y}$ are affinely linear independent~\cite{W2008}. Here, we assume that the set $\mathcal{Y}$ is $\Lambda$-poised on the ball $\overline{B}(y^{0}, \delta)$, for some $\Lambda > 0$. This definition is taken from~\cite{CSV2009} and will be better discussed in Section~\ref{sec:underdetermined} for
underdetermined polynomials. The following lemma, adapted from~\cite{VKPS2017}, shows that Assumption
\ref{hip_L_invertivel_limitada} is valid when the models to be built are linear or quadratic and $\mathcal{Y}$ is $\Lambda$-poised.

\begin{lemma}\label{lema_08}
Let $\Lambda > 0$. If $\mathcal{Y} = \{y^{0}, y^{1}, \dots, y^{n}\}$ is $\Lambda$-poised in $\overline{B}(y^{0}, \delta)$ with respect to the basis $\phi$ of $\mathcal{P}^{1}_{n}$, then
$\norm{\hat{\mathbf{L}}_{L}^{-1}} \leq \Lambda\sqrt{n}$. If $\mathcal{Y} = \{y^{0}, y^{1}, \dots, y^{q}\}$ is a
$\Lambda$-poised set in $\overline{B}(y^{0}, \delta)$ with respect to the basis $\phi$ in $\mathcal{P}^{2}_{n}$, then, $ \norm{\hat{\mathbf{L}}_{Q}^{-1}} \leq 4\Lambda\sqrt{(q+1)^{3}}$.
\end{lemma}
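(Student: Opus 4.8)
The plan is to treat the linear and quadratic cases separately, and in both to use the fact that $\Lambda$-poisedness of $\mathcal{Y}$ on $\overline{B}(y^{0},\delta)$ is, by definition (taken from~\cite[Section~3]{CSV2009}), a uniform bound by $\Lambda$ on the Lagrange polynomials $\ell_{0},\dots,\ell_{p}$ associated with $\mathcal{Y}$ over that ball. After shifting the points to $y^{0}$ and scaling the ball to $\overline{B}(0,1)$, the columns of $\hat{\mathbf{L}}_{L}^{-1}$ (resp.\ $\hat{\mathbf{L}}_{Q}^{-1}$) turn out to be exactly the coefficient vectors of these Lagrange polynomials, so the two estimates reduce to bounding such coefficient vectors in terms of $\Lambda$.

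For the linear case, write $\hat{y}^{i}=(y^{i}-y^{0})/\delta$, so the rows of $\hat{\mathbf{L}}_{L}$ are the $(\hat{y}^{i})^{T}$, $i=1,\dots,n$. For $j=1,\dots,n$ the polynomial $\ell_{j}$ vanishes at $y^{0}$, hence $\ell_{j}(x)=g_{j}^{T}(x-y^{0})$, and the conditions $\ell_{j}(y^{i})=\delta_{ij}$ read $\hat{\mathbf{L}}_{L}(\delta g_{j})=e_{j}$; thus the $j$-th column of $\hat{\mathbf{L}}_{L}^{-1}$ is $\hat{g}_{j}:=\delta g_{j}$. Since $\ell_{j}(y^{0}+\delta\hat{x})=\hat{g}_{j}^{T}\hat{x}$, $\Lambda$-poisedness gives $\norm{\hat{g}_{j}}=\max_{\norm{\hat{x}}\le 1}\abs{\hat{g}_{j}^{T}\hat{x}}\le\Lambda$ for every $j$, and therefore $\norm{\hat{\mathbf{L}}_{L}^{-1}}\le\bigl(\sum_{j=1}^{n}\norm{\hat{g}_{j}}^{2}\bigr)^{1/2}\le\Lambda\sqrt{n}$.

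For the quadratic case the argument is the same in spirit but requires bookkeeping. For $j=1,\dots,q$ the polynomial $\ell_{j}$ again vanishes at $y^{0}$, so its scaled version $\hat{\ell}_{j}(\hat{x})=\ell_{j}(y^{0}+\delta\hat{x})$ has no constant term and can be written as $\overline{\varphi}(\hat{x})^{T}\hat{v}_{j}$; the conditions $\hat{\ell}_{j}(\hat{y}^{i})=\delta_{ij}$ give $\hat{\mathbf{L}}_{Q}\hat{v}_{j}=e_{j}$, so $\hat{v}_{j}$ is the $j$-th column of $\hat{\mathbf{L}}_{Q}^{-1}$ (this is where the absence of the constant monomial in $\overline{\varphi}$ is used, and why one need not pass through the full interpolation matrix, although doing so is an equivalent route). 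It remains to bound $\norm{\hat{v}_{j}}$ from $\abs{\hat{\ell}_{j}(\hat{x})}\le\Lambda$ on $\overline{B}(0,1)$: each monomial coefficient of $\hat{\ell}_{j}$ is isolated by evaluating $\hat{\ell}_{j}$ at a few points of $\overline{B}(0,1)$ --- the vectors $\pm e_{k}$ for the $x_{k}$ and $x_{k}^{2}$ coefficients, and $\frac{1}{\sqrt{2}}(e_{k}\pm e_{l})$ for the cross terms $x_{k}x_{l}$ --- so every entry of $\hat{v}_{j}$ is bounded by an absolute multiple of $\Lambda$; combining these entrywise bounds through the Frobenius norm, which carries powers of $q+1$, and keeping track of the absolute constant as in~\cite{VKPS2017}, yields $\norm{\hat{\mathbf{L}}_{Q}^{-1}}\le 4\Lambda\sqrt{(q+1)^{3}}$.

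The routine portions are the explicit monomial-extraction identities and the chain of norm inequalities. The only points that need care are (i) checking that the points used to recover the quadratic coefficients still belong to the unit ball, and (ii) the passage from the entrywise coefficient bounds to the Euclidean operator norm, where the precise power of $q+1$ (and hence the exact constant $4\sqrt{(q+1)^{3}}$) is dictated by which norm inequalities one chains together; I expect this step --- the norm bookkeeping in the quadratic case --- to be the main obstacle to a clean write-up.
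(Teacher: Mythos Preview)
The paper does not give its own proof of this lemma --- it is stated as ``adapted from~\cite{VKPS2017}'' and no argument is supplied --- so the comparison is against the standard proof in~\cite{VKPS2017} and~\cite[Theorem~3.14]{CSV2009}. Your linear case is exactly that argument: the columns of $\hat{\mathbf{L}}_{L}^{-1}$ are the (scaled) gradients of the Lagrange polynomials $\ell_{1},\dots,\ell_{n}$, $\Lambda$-poisedness bounds each by $\Lambda$, and the Frobenius bound gives the factor $\sqrt{n}$. This is correct and matches the reference.

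In the quadratic case your identification of the columns of $\hat{\mathbf{L}}_{Q}^{-1}$ with the coefficient vectors $\hat{v}_{j}$ of the shifted-and-scaled Lagrange polynomials is also correct, and the route you propose --- recovering each monomial coefficient by evaluating $\hat{\ell}_{j}$ at $\pm e_{k}$ and $\tfrac{1}{\sqrt{2}}(e_{k}\pm e_{l})$ --- does work and yields an entrywise bound of the form $\abs{(\hat{v}_{j})_{r}}\le 4\Lambda$, from which the Frobenius estimate already gives something no worse than $4\Lambda(q+1)^{3/2}$. The standard proof in~\cite{CSV2009} takes a slightly different and cleaner path: instead of isolating coefficients one by one, it invokes the auxiliary result (Lemma~\ref{lema_aux_01} here) that any quadratic with $\norm{v}_{\infty}=1$ attains absolute value at least $1/4$ somewhere on $\overline{B}(0,1)$, which immediately converts $\max_{\overline{B}(0,1)}\abs{\hat{\ell}_{j}}\le\Lambda$ into $\norm{\hat{v}_{j}}_{\infty}\le 4\Lambda$. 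Your explicit evaluation scheme is in effect a constructive proof of that lemma for the particular basis $\overline{\varphi}$; the two routes are equivalent in spirit, but the lemma-based version avoids the ad hoc bookkeeping you correctly flag as the main obstacle.
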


\section{Underdetermined models} \label{sec:underdetermined}

The main drawback of determined polynomials is the high number of interpolation points needed in the quadratic case. One could use only linear polynomials, but quadratic polynomials are known to better explore the curvature of $f$. Therefore, we are interested in building quadratic polynomials using less than $q$ interpolation points. Now we assume that $n < p < q$ and $q = dim(\mathcal{P}_{n}^{2}(\mathbb{R})) - 1 = (n^{2} + 3n) / 2$.

When dealing with classical interpolation theory, in the sense of (\ref{eq_interpolation}), building an underdetermined interpolation quadratic model $\mathrm{m} \in \Pol{2}{n}$ can be viewed as finding $\alpha \in \mathbb{R}^{q + 1}$ such that
\begin{equation*}
    \sum_{j = 0}^{q} \alpha_{j}\phi_{j}(y^{i}) = f(y^{i}) \text{, for } i = 0, \dots, p,
\end{equation*}
or, equivalently,
\begin{equation} \label{def_sistema_interpolacao}
    \textbf{M}(\phi, \mathcal{Y}) \alpha = f(\mathcal{Y}),
\end{equation}
where $\phi = \{\phi_{0}(x), \phi_{1}(x), \dots, \phi_{q}(x) \}$ is a basis for $\Pol{2}{n}$,
\begin{equation} \label{def_matriz_M_subdeterminada}
    \textbf{M}(\phi, \mathcal{Y}) = 
    \left[
        \begin{array}{cccc}
            \phi_{0}(y^{0}) & \phi_{1}(y^{0}) & \cdots & \phi_{q}(y^{0}) \\
            \phi_{0}(y^{1}) & \phi_{1}(y^{1}) & \cdots & \phi_{q}(y^{1}) \\
            \vdots          & \vdots          & \ddots & \vdots          \\
            \phi_{0}(y^{p}) & \phi_{1}(y^{p}) & \cdots & \phi_{q}(y^{p})
        \end{array}
    \right], \text{ and }
    f(\mathcal{Y}) = 
    \left[
        \begin{array}{c}
            f(y^{0}) \\
            f(y^{1}) \\
            \vdots      \\
            f(y^{p}) 
        \end{array}
    \right].
\end{equation}
Under the above conditions, the number of interpolation points is less than the dimension of the space of the polynomials $\mathcal{P}_{n}^{2}(\mathbb{R})$, that is, $\abs{\mathcal{Y}} = p + 1 < q + 1 = dim(\mathcal{P}_{n}^{2}(\mathbb{R}))$. Therefore, the interpolation polynomials defined by~(\ref{def_sistema_interpolacao}) are no longer unique.

In this section, our goal is to build an underdetermined quadratic model, which approximately interpolates the function $f$ over $ \mathcal{Y} \subset \overline{B}(y^{0}, \delta)$, in the sense of Assumption~\ref{hip_dif_limitada}. First, let us consider matrices 
\begin{equation}\label{def_matriz_quad_sub}
    \mathbf{L}_{s} = \left[\begin{array}{c}
            (y^{1} - y^{0})^{T} \\
            \vdots              \\
            (y^{p} - y^{0})^{T}
            \end{array}\right] = 
            \left[\begin{array}{ccc}
            y_{1}^{1} - y_{1}^{0} & \dots  & y_{n}^{1} - y_{n}^{0} \\
            \vdots                & \ddots & \vdots                \\
            y_{1}^{p} - y_{1}^{0} & \dots  & y_{n}^{p} - y_{n}^{0}
            \end{array}\right],
\end{equation}
and
\begin{equation}\label{def_matriz_quad_sub_escalonada}
    \hat{\mathbf{L}}_{s} = \frac{1}{\delta}\mathbf{L}_{s}.
\end{equation}

In order to obtain the main error bounds for the underdetermined case, we need to assume some geometric conditions. In other words, we need the sample set $\mathcal{Y}$ to be poised for linear regression and the Hessian of the model to be bounded. Such properties are given by assumptions~\ref{hip_L_coluna_completo_limitada} and~\ref{hip_norma_H_limitada}.

\begin{assumption}\label{hip_L_coluna_completo_limitada}
The matrix $\mathbf{L}_{s} \in \mathbb{R}^{p \times n}$ has full column rank, that is, $rank(\mathbf{L}_{s}) = n $, and there is a constant $\kappa_{s} > 0$ such that $\norm{\hat{\mathbf{L}}^{\dagger}_{s}} \leq \kappa_{s}$, where $\hat{\mathbf{L}}^{\dagger}_{s}$ denotes the Moore-Penrose pseudo-inverse of the matrix $\hat{\mathbf{L}}_{s}$.
\end{assumption}

\begin{assumption}\label{hip_norma_H_limitada}
There is a constant $\kappa_{H} \geq 0$ such that $\norm{\mathbf{H}} \leq \kappa_{H}$.
\end{assumption}

The following theorem establishes error bounds for the underdetermined model and its derivatives in $\mathcal{X} \cap \overline{B}(y^{0}, \delta)$.

\begin{theorem}\label{teo_limitantes_subdeterminada}
Suppose that Assumptions \ref{hip_f_diferenciavel}, \ref{hip_f_limitada}, \ref{hip_L_coluna_completo_limitada} and \ref{hip_norma_H_limitada} hold. Then, for all $x \in \mathcal{X} \cap \overline{B}(y^{0}, \delta)$,
\begin{align}
        \abs{f(x) - \mathrm{m}(x)} &\leq \bigg( \frac{1}{2} \big( L + \kappa_{H} \big) + \kappa +  2\kappa_{s} \sqrt{p} \bigg(L + \kappa + \frac{3}{4} \kappa_{H} \bigg)\bigg) \delta^{2}, \label{teo_lim_sub_01} \\
        \norm{\nabla f(x) - \nabla \mathrm{m}(x)} &\leq 2 \kappa_{s} \sqrt{p}\bigg(L + \kappa +  \frac{3}{4}\kappa_{H} \bigg)\delta. \label{teo_lim_sub_02}
\end{align}

\end{theorem}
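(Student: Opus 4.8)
The model is quadratic, so it can be written around the base point as
\[
\mathrm{m}(x)=\mathrm{m}(y^{0})+\nabla \mathrm{m}(y^{0})^{T}(x-y^{0})+\tfrac12\,(x-y^{0})^{T}\mathbf{H}(x-y^{0}),
\]
where $\mathbf{H}=\nabla^{2}\mathrm{m}$ is constant with $\norm{\mathbf{H}}\le\kappa_{H}$ by Assumption~\ref{hip_norma_H_limitada}, and by Assumption~\ref{hip_dif_limitada} we have $\abs{\mathrm{m}(y^{i})-f(y^{i})}\le\kappa\delta^{2}$ for every $y^{i}\in\mathcal{Y}\subset\overline{B}(y^{0},\delta)$. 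Conceptually, the plan is to apply the linear‑regression error argument (the one behind Lemma~\ref{lema_08} and the linear part of Theorem~\ref{teorema_limitantes_linear_quadratico}) to the residual $\psi(x)=f(x)-\tfrac12(x-y^{0})^{T}\mathbf{H}(x-y^{0})$, whose linear Taylor part is approximately interpolated on $\mathcal{Y}$ by the affine part of $\mathrm{m}$ and whose gradient is $(L+\kappa_{H})$‑Lipschitz; the only new ingredient relative to the determined case is that the square invertible matrix is replaced by $\hat{\mathbf{L}}_{s}$ of full column rank, so $\hat{\mathbf{L}}_{s}^{\dagger}\hat{\mathbf{L}}_{s}=\mathbf{I}_{n}$ plays the role of $\hat{\mathbf{L}}_{L}^{-1}\hat{\mathbf{L}}_{L}=\mathbf{I}_{n}$.

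\textbf{Gradient bound~\eqref{teo_lim_sub_02}.} First I would get one scalar estimate per sample point: for $i=1,\dots,p$, subtracting the (approximate) interpolation identity at $y^{0}$ from the one at $y^{i}$ and using the expansion of $\mathrm{m}$ above yields
\[
(y^{i}-y^{0})^{T}\!\bigl(\nabla f(y^{0})-\nabla \mathrm{m}(y^{0})\bigr)
=\bigl[\mathrm{m}(y^{i})-f(y^{i})\bigr]-\bigl[\mathrm{m}(y^{0})-f(y^{0})\bigr]+r_{i}-\tfrac12(y^{i}-y^{0})^{T}\mathbf{H}(y^{i}-y^{0}),
\]
with $r_{i}=f(y^{i})-f(y^{0})-\nabla f(y^{0})^{T}(y^{i}-y^{0})$. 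The two bracketed terms are each at most $\kappa\delta^{2}$; $\abs{r_{i}}\le L\norm{y^{i}-y^{0}}^{2}\le L\delta^{2}$ by Assumption~\ref{hip_f_diferenciavel}; and the quadratic term is at most $\tfrac12\kappa_{H}\delta^{2}$. Stacking the $p$ inequalities and dividing by $\delta$ shows that each coordinate of $\hat{\mathbf{L}}_{s}\bigl(\nabla f(y^{0})-\nabla \mathrm{m}(y^{0})\bigr)$ is bounded in absolute value by $(2\kappa+L+\tfrac12\kappa_{H})\delta$, hence $\norm{\hat{\mathbf{L}}_{s}(\nabla f(y^{0})-\nabla \mathrm{m}(y^{0}))}\le\sqrt{p}\,(2\kappa+L+\tfrac12\kappa_{H})\delta$. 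Since $\hat{\mathbf{L}}_{s}$ has full column rank, $\hat{\mathbf{L}}_{s}^{\dagger}\hat{\mathbf{L}}_{s}=\mathbf{I}_{n}$, so by Assumption~\ref{hip_L_coluna_completo_limitada}
\[
\norm{\nabla f(y^{0})-\nabla \mathrm{m}(y^{0})}\le\norm{\hat{\mathbf{L}}_{s}^{\dagger}}\,\norm{\hat{\mathbf{L}}_{s}(\nabla f(y^{0})-\nabla \mathrm{m}(y^{0}))}\le\kappa_{s}\sqrt{p}\,\bigl(2\kappa+L+\tfrac12\kappa_{H}\bigr)\delta .
\]
Then for general $x\in\mathcal{X}\cap\overline{B}(y^{0},\delta)$ I would split $\nabla f(x)-\nabla \mathrm{m}(x)=[\nabla f(x)-\nabla f(y^{0})]+[\nabla f(y^{0})-\nabla \mathrm{m}(y^{0})]-\mathbf{H}(x-y^{0})$, bounding the first difference by $L\delta$ and the last by $\kappa_{H}\delta$. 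Finally, because $\norm{\hat{\mathbf{L}}_{s}}\le\sqrt{p}$ (its rows have norm $\le1$) and $\hat{\mathbf{L}}_{s}^{\dagger}\hat{\mathbf{L}}_{s}=\mathbf{I}_{n}$, one has $\kappa_{s}\sqrt{p}\ge1$, so the two stray terms $L\delta$ and $\kappa_{H}\delta$ can be absorbed, giving $\norm{\nabla f(x)-\nabla \mathrm{m}(x)}\le\kappa_{s}\sqrt{p}\,(2L+2\kappa+\tfrac32\kappa_{H})\delta$, which is~\eqref{teo_lim_sub_02}.

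\textbf{Function‑value bound~\eqref{teo_lim_sub_01}.} The error $e=f-\mathrm{m}$ has $(L+\kappa_{H})$‑Lipschitz gradient since $\nabla \mathrm{m}$ has Lipschitz constant $\norm{\mathbf{H}}\le\kappa_{H}$. A first‑order Taylor expansion of $e$ about a fixed $x\in\mathcal{X}\cap\overline{B}(y^{0},\delta)$, evaluated at $y^{0}$, gives $\abs{e(x)}\le\abs{e(y^{0})}+\norm{\nabla e(x)}\,\norm{x-y^{0}}+\tfrac12(L+\kappa_{H})\norm{x-y^{0}}^{2}$. Using $\abs{e(y^{0})}\le\kappa\delta^{2}$, $\norm{x-y^{0}}\le\delta$, and the bound~\eqref{teo_lim_sub_02} for $\norm{\nabla e(x)}$, this collapses exactly to~\eqref{teo_lim_sub_01}.

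\textbf{Expected main obstacle.} There is no structural difficulty: the argument is the determined‑case proof with $\hat{\mathbf{L}}_{s}^{\dagger}$ in place of $\hat{\mathbf{L}}_{L}^{-1}$. The delicate part is purely bookkeeping — keeping every factor $\tfrac12$, $2$, $\sqrt{p}$, $\kappa$, $L$, $\kappa_{H}$, $\kappa_{s}$ in its place, choosing the Taylor remainder estimates consistently, and noticing the normalization $\kappa_{s}\sqrt{p}\ge1$ that lets the $x$‑dependent correction terms be folded into the final compact constant. This is precisely the "clearer proof" the paper advertises, so most of the effort goes into a transparent presentation rather than into any single hard step.
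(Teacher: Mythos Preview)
Your argument is correct and is exactly the approach the paper has in mind when it cites \cite[Theorem~5.4]{CSV2009} and \cite[Theorem~2]{VKPS2017}: derive a per--sample linear identity for $\nabla f(y^{0})-\nabla\mathrm{m}(y^{0})$, invert it with $\hat{\mathbf{L}}_{s}^{\dagger}$ (using $\hat{\mathbf{L}}_{s}^{\dagger}\hat{\mathbf{L}}_{s}=\mathbf{I}_{n}$ in place of an inverse), transfer to a general $x$ via Lipschitz continuity of $\nabla f$ and $\nabla\mathrm{m}$, and then obtain~\eqref{teo_lim_sub_01} by a single Taylor step on $e=f-\mathrm{m}$; the observation $\kappa_{s}\sqrt{p}\ge 1$ is precisely what lets the stray $L\delta$ and $\kappa_{H}\delta$ terms be absorbed into the stated constant. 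One minor side remark: the ``clearer proof'' the paper advertises refers to Theorem~\ref{lema_lim_matriz_quad_sub} (the bound on $\norm{\hat{\mathbf{L}}_{s}^{\dagger}}$), not to the present theorem, for which the paper gives no details beyond the two citations.
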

\begin{proof}
The proof is strongly based in the ideas of~\cite[Theorem 5.4]{CSV2009} and~\cite[Theorem 2]{VKPS2017}. In addition $\norm{\mathbf{H}}$ and $\norm{\hat{\mathbf{L}}^{\dagger}_{s}}$ were replaced by their respective bounds in equations~\eqref{teo_lim_sub_01} and~\eqref{teo_lim_sub_02}.
\end{proof}

\subsection{Minimum Frobenius norm models}

To establish a bound for the norm of the Hessian of the model, we must specify its construction. In this sense, we will reduce the degree of freedom in choosing the model, assuming that it is the minimum Frobenius norm model. We will also need to define what is a relaxed minimum Frobenius norm model.

Let $\overline{\phi}$ be the natural basis of $\mathcal{P}^{2}_{n}(\mathbb{R})$ and consider its split into the sets $\overline{\phi}_{L} = \{ 1, x_{1}, \dots, x_{n} \}$ and $\overline{\phi}_{Q} = \big\{ \frac{1}{2} x_{1}^{2}, x_{1}x_{ 2}, \dots, \frac{1}{2} x_{n}^{2} \big \}$. Then we can write the underdetermined interpolation model as
\begin{equation*}
    \mathrm{m}(x) = \alpha_{L}^{T} \overline{\phi}_{L}(x) + \alpha_{Q}^{T} \overline{\phi}_{Q}(x),
\end{equation*}
where $\alpha_{L}$ and $\alpha_{Q}$ are appropriate parts of the vector of coefficients $\alpha$. As in \cite[p. 80-81]{CSV2009}, we can define the minimum Frobenius norm solution $\alpha^{\textrm{mfn}}$ as the solution of the optimization problem,
\begin{equation}\label{eq_definicao_alpha_mfn}
    \begin{array}{cc}
        \min & \frac{1}{2}\norm{\alpha_{Q}}^{2} \\
        \text{s.t.} & \mathbf{M}(\overline{\phi}_{L}, \mathcal{Y})\alpha_{L} + \mathbf{M}(\overline{\phi}_{Q}, \mathcal{Y})\alpha_{Q} = f(\mathcal{Y}),
    \end{array}
\end{equation}
in the variables $\alpha_{L}$ and $\alpha_{Q}$, where $\mathcal{Y}$ is the set of interpolation points and the matrices $\mathbf{M}(\overline{\phi}_ {L}, \mathcal{Y})$ and $\mathbf{M}(\overline{\phi}_{Q}, \mathcal{Y})$ are submatrices of $\mathbf{M}(\overline{ \phi}, \mathcal{Y})$, given in (\ref{def_matriz_M_subdeterminada}), whose columns correspond to the elements of $\overline{\phi}_{L}$ and $\overline{\phi}_{ Q}$, respectively.

It is important to point out that the condition of existence and uniqueness of the minimum Frobenius norm model is that the matrix defined by
\begin{equation*}
    \mathbf{F}(\overline{\phi}, \mathcal{Y}) = \left[\begin{array}{cc}
            \mathbf{M}(\overline{\phi}_{Q}, \mathcal{Y}) \mathbf{M}(\overline{\phi}_{Q}, \mathcal{Y})^{T} & \mathbf{M}(\overline{\phi}_{L}, \mathcal{Y}) \\
            \mathbf{M}(\overline{\phi}_{L}, \mathcal{Y})^{T} & 0
            \end{array}\right]
\end{equation*}
is nonsingular. Note that $\mathbf{F}(\overline{\phi}, \mathcal{Y})$ is nonsingular if and only if $\mathbf{M}(\overline{\phi}_{L}, \mathcal {Y})$ has a full column rank and $\mathbf{M}(\overline{\phi}_{Q}, \mathcal{Y}) \mathbf{M}(\overline{\phi}_{Q} , \mathcal{Y})^{T}$ is positive definite on the nullspace of $\mathbf{M}(\overline{\phi}_{L}, \mathcal{Y})^{T} $, this last condition being guaranteed if $\mathbf{M}(\overline{\phi}_{Q}, \mathcal{Y})$ has full row rank \cite[p. 81]{CSV2009}.

We say that $\mathcal{Y}$ is poised in the minimum Frobenius norm sense if the matrix $\mathbf{F}(\overline{\phi}, \mathcal{Y})$ is nonsingular. Note that poisedness in the minimum Frobenius norm sense implies poisedness in the linear interpolation or regression senses and, as a result, poisedness for underdetermined quadratic interpolation in the minimum norm sense \cite[p. 81]{CSV2009}.

\begin{remark}
As we saw earlier, if $\mathcal{Y}$ is poised in the minimum Frobenius norm sense, then $\mathbf{F}(\overline{\phi}, \mathcal{Y})$ is nonsingular and hence $ \mathbf{M}(\overline{\phi}_{L}, \mathcal{Y})$ has a full column rank. So, as
\begin{equation*}
    \mathbf{M}(\overline{\phi}_{L}, \mathcal{Y}) =
    \begin{bmatrix}
    1 & 0 \\
    e & \mathbf{Id}
    \end{bmatrix}
    \left[\begin{array}{cc}
         1 & {y^0}^T \\
         0 & \mathbf{L}_{s}
    \end{array}\right] = \mathbf{E}^{-1}
    \left[\begin{array}{cc}
         1 & {y^0}^T \\
         0 & \mathbf{L}_{s}
    \end{array}\right]
\end{equation*}
where $e = [1, 1, \dots, 1]^{T} \in \mathbb{R}^{p}$, and $\mathbf{Id} \in \mathbb{R}^{p \times p}$ is the identity matrix. Since $\mathbf{E}$ is an elementary nonsingular matrix and $ \mathbf{M}(\overline{\phi}_{L}, \mathcal{Y})$ has full column rank, it follows that $\mathbf{L}_{s}$ has full column rank, which partially satisfies Assumption \ref{hip_L_coluna_completo_limitada}.
\end{remark}

Therefore, we will assume that the following hypothesis holds.

\begin{assumption}\label{hip_posic_inter_linear}
The set of sample points $\mathcal{Y} = \{ y^{0}, y^{1}, \dots, y^{p} \} \subset \mathbb{R}^{n}$ is poised in the minimum Frobenius norm sense on $\overline{B}(y^{0}, \delta)$.
\end{assumption}

Consider the following auxiliary lemma.

\begin{lemma}\label{lema_aux_01}
There is a number $\sigma_{\infty} > 0$ such that for any choice of $v$ satisfying $\norm{v}_{\infty} = 1$, there is $z \in \overline{B}(0 , 1)$ such that $\abs{v^{T}\phi(z)}\geq \sigma_{\infty}$. If, in addition, $v^{T}\overline{\phi}(x)$ is a quadratic polynomial and $\overline{\phi}$ is the natural basis in $\Pol{2}{n}$, then
\begin{equation*}
    \max_{x \in \overline{B}(0,1)} \abs{v^{T}\overline{\phi}(x)} \geq \frac{1}{4}.
\end{equation*}
If $\overline{\phi}$ is the natural basis of $\Pol{1}{n}$, then,
\begin{equation*}
     \max_{x \in \overline{B}(0, 1)} \abs{v^{T}\overline{\phi}(x)} \geq 1.
\end{equation*}
\end{lemma}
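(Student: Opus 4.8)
\textbf{Proof plan for Lemma~\ref{lema_aux_01}.}

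The plan is to obtain the first, qualitative statement by a compactness argument, and then extract the explicit constants $\frac14$ and $1$ by direct estimation on a cleverly chosen candidate point $z$. For the first part, I would consider the set $S = \{ v \in \R^{q+1} : \norm{v}_\infty = 1 \}$, which is compact, and the function $g(v) = \max_{z \in \overline B(0,1)} \abs{v^T \phi(z)}$. Since $\phi$ is a basis, $v^T \phi(\cdot)$ is a nonzero polynomial whenever $v \neq 0$, hence not identically zero on $\overline B(0,1)$ (a nonzero polynomial cannot vanish on a ball), so $g(v) > 0$ on $S$. The function $g$ is continuous in $v$ (it is a max over a compact set of functions jointly continuous in $(v,z)$), so by Weierstrass it attains a positive minimum $\sigma_\infty > 0$ on $S$; this $\sigma_\infty$ works for the first claim. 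For a general $v$ with $\norm{v}_\infty = 1$, the maximizing $z \in \overline B(0,1)$ gives $\abs{v^T \phi(z)} \geq \sigma_\infty$.

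For the explicit bounds I would work directly with the natural basis $\overline\phi$ and pick the coordinate $k$ realizing $\abs{v_k} = \norm{v}_\infty = 1$. The idea is to choose $z$ aligned with the $k$-th basis direction and evaluate. In the linear case $\overline\phi = [1, x_1, \dots, x_n]^T$: if the constant coordinate attains the max, take $z = 0$ and get $\abs{v^T\overline\phi(0)} = \abs{v_0} = 1$; otherwise the max is attained by some $v_k$ with $1 \le k \le n$, and taking $z = \operatorname{sign}(v_k)\, e_k \in \overline B(0,1)$ gives $v^T\overline\phi(z) = v_0 + v_k \operatorname{sign}(v_k) = v_0 + 1$, which does not immediately give $\ge 1$ if $v_0$ is negative — so instead I would use $z = \pm e_k$ and average: $\tfrac12\big(v^T\overline\phi(e_k) + v^T\overline\phi(-e_k)\big) = v_0$ while $\tfrac12\big(v^T\overline\phi(e_k) - v^T\overline\phi(-e_k)\big) = v_k$, so one of the two evaluations has absolute value $\ge \abs{v_k} = 1$ (picking the sign so the $v_0$ and $v_k$ contributions do not cancel). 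This yields the bound $1$.

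For the quadratic case the same averaging trick isolates monomials by parity. Writing $v^T\overline\phi(x) = v_0 + \ell(x) + \tfrac12 x^T H x$ with $\ell$ linear and $H$ symmetric (the quadratic part), the even part at $\pm x$ is $v_0 + \tfrac12 x^T H x$ and the odd part is $\ell(x)$. If $\norm v_\infty$ is attained on a linear or constant coefficient, reuse the linear argument to get a bound of $1 \ge \tfrac14$. If it is attained on a pure-square coefficient $\tfrac12 x_k^2$, i.e. $H_{kk}$ with $\abs{H_{kk}} \ge 1$, evaluate at $z = e_k$ and $z = 0$: $v^T\overline\phi(e_k) - v^T\overline\phi(0) = v_k + \tfrac12 H_{kk}$, and combined with $\abs{v^T\overline\phi(0)} = \abs{v_0}$ one can bound one of these evaluations below. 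The genuinely delicate case, and the one I expect to be the main obstacle, is when the max is attained on a \emph{cross} term $x_j x_k$, i.e.\ $\abs{H_{jk}} \ge 1$ with $j \ne k$: here I would evaluate $v^T\overline\phi(x)$ at the four points $x = \tfrac{1}{\sqrt2}(\pm e_j \pm e_k) \in \overline B(0,1)$ and take the signed average $\tfrac14\sum_{\pm,\pm}(\pm)(\pm) v^T\overline\phi(x)$, which annihilates the constant, the linear terms and the pure squares and leaves exactly $\tfrac12 H_{jk} \cdot \tfrac1{\sqrt2}\cdot\tfrac1{\sqrt2}\cdot 4 \cdot \tfrac14 \cdot 2 = \tfrac12 H_{jk}$ (the factor bookkeeping is the routine part), so some one of the four evaluations has absolute value at least $\tfrac12\abs{H_{jk}} \ge \tfrac12 \ge \tfrac14$. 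Reconciling all cases to the single clean constant $\tfrac14$ — the pure-square case is where the worst constant arises, since $\abs{\tfrac12 H_{kk}}$ can be as small as $\tfrac12$ but the $v_k$ and $v_0$ contributions can eat into it — is the bookkeeping I would need to be careful with, but the $e_k$/$0$ evaluations suffice to land at $\tfrac14$.
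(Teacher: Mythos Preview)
The paper does not actually prove this lemma; it simply cites Lemmas~3.10, 3.11 and 6.7 of~\cite{CSV2009}. Your compactness argument for the existence of $\sigma_\infty$ and your case analysis with coordinate-direction test points are exactly the standard route, so in that sense you are aligned with the underlying reference.

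There is, however, one concrete error in your plan. In the pure-square case you assert that ``the $e_k/0$ evaluations suffice to land at $\tfrac14$''. They do not. Take $n\ge 1$, let the coefficient of $\tfrac12 x_k^2$ equal $1$ (so $H_{kk}=1$ and $\norm{v}_\infty=1$), set the linear coefficient $v_k=-\tfrac12$, $v_0=0$, and all other entries of $v$ equal to $0$. Then
\[
v^T\overline\phi(0)=0,\qquad v^T\overline\phi(e_k)=v_0+v_k+\tfrac12 H_{kk}=0,
\]
so both test values vanish, yet the polynomial $\tfrac12 x_k(x_k-1)$ is not small on $\overline B(0,1)$ (indeed it equals $1$ at $-e_k$). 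The fix is immediate within your own framework: also evaluate at $-e_k$. With $A=v^T\overline\phi(e_k)$, $B=v^T\overline\phi(-e_k)$, $C=v^T\overline\phi(0)$ one has $A+B-2C=H_{kk}$, hence $|A|+|B|+2|C|\ge 1$ and $\max(|A|,|B|,|C|)\ge \tfrac14$. This is precisely the case that forces the constant $\tfrac14$, as you suspected; your arguments for the constant/linear case (bound $1$) and the cross-term case (bound $\tfrac12$) are correct as written.
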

\begin{proof}
See lemmas 3.10, 3.11, and 6.7 in~\cite{CSV2009}.
\end{proof}

\begin{remark}\label{obs_01}
Let $\overline{\phi}$ be the natural basis of $\Pol{1}{n}$. Given $\overline{v} \in \mathbb{R}^{n+1}$ with $\norm{\overline{v}} = 1$, there are $\beta \in (0, \sqrt{n+1})$ and $v \in \mathbb{R}^{n+1}$ such that $\norm{v}_{\infty} = 1$ and $v = \beta \overline{v}$. Thus, by Lemma \ref{lema_aux_01}, we have that
\begin{equation*}
    \max_{x \in \overline{B}(0, 1)} \abs{\overline{v}^{T}\overline{\phi}(x)} = \max_{x \in \overline{B}(0, 1)} \frac{1}{\beta} \abs{v^{T}\overline{\phi}(x)} \geq \frac{1}{\sqrt{n+1}} \max_{x \in \overline{B}(0, 1)} \abs{v^{T}\overline{\phi}(x)} \geq \frac{1}{\sqrt{n+1}}.
\end{equation*}
\end{remark}

Equation~\eqref{eq_definicao_alpha_mfn} is strongly related with the exact interpolation condition~\eqref{eq_interpolation}. In order to relax such condition we will first define minimum Frobenius norm Lagrange polynomials.

\begin{defn} \label{def_mfn_lag_poly}
Given a set $\mathcal{Y}$ of $p + 1$ interpolation points, the set of $p + 1$ polynomials $\ell_j(x) = \alpha_j^T \natbasis(x)$, $j = 0, \dots, p$, is called set of minimum Frobenius norm Lagrange polynomials for $\natbasis$, if $\alpha_j = ((\alpha_j)_L, (\alpha_j)_Q)$ is the solution of
\[
\begin{array}{ll}
    \min & \frac{1}{2} \norm{\alpha_Q}^2 \\
    \mbox{s. t.} & M(\natbasis_L, \mathcal{Y}) \alpha_L + M(\natbasis_Q, \mathcal{Y}) \alpha_Q = e_{j + 1},
\end{array}
\]
where $e_j$ is the $j$-th canonical vector in $\R^{p + 1}$.
\end{defn}

By comparing the KKT conditions of Definition~\ref{def_mfn_lag_poly} and equation~\eqref{eq_definicao_alpha_mfn} it is possible to show that the minimum Frobenius norm polynomial that interpolates $f$ in $\mathcal{Y}$ can be defined as $\mathrm{m}(x) = \sum_{j = 1}^{p + 1} \ell_j(x) f(y^j).  $ So, in order to relax~\eqref{eq_definicao_alpha_mfn}, we will say that our relaxed minimum Frobenius norm polynomials are given by
\begin{equation}\label{eq_relaxed_mfn_pol}
    \mathrm{m}(x) = \sum_{j = 1}^{p + 1} \ell_j(x) \gamma_j
\end{equation}
where $\gamma_j$, $j = 1, \dots, p + 1$, are chosen such that Assumption~\ref{hip_dif_limitada} holds. One can think that $\mathrm{m}$ was obtained by solving~\eqref{eq_definicao_alpha_mfn} with $f(y^j)$ replaced by $\gamma_j$ for all $j = 0, \dots, p$.

The theorems that follow are the main contribution of this work. Their establish bounds for $\norm{\nabla^2 m(x)}$ and $\norm{\hat L^\dagger_s}$ under the hypothesis of well poised interpolation sets and, therefore, assumptions~\ref{hip_L_coluna_completo_limitada} and~\ref{hip_norma_H_limitada} can be removed from Theorem~\ref{teo_limitantes_subdeterminada}. Using a partial definition from~\cite{CSV2009}, we first define what we mean as well poised interpolation sets for minimum Frobenius norm quadratic models.

\begin{defn} \label{def_mfn_lambda_poised} %
Let $\Lambda > 0$ and $B \subseteq \R^n$ be given.  Let $\natbasis$ be the natural basis of monomials of $\Pol{2}{n}$. A poised set $\mathcal{Y}$ is said to be $\Lambda$-poised in $B$ in the minimum Frobenius norm sense if and only if for any $x \in B$, the solution $\lambda(x) \in \R^{p + 1}$ of
\begin{equation*}
    \begin{array}{ll}
    \min & \frac{1}{2} \norm{M(\natbasis_Q, \mathcal{Y})^T \lambda(x) - \natbasis_Q(x)}^2 \\
    \mbox{s.t.} & M(\natbasis_L, \mathcal{Y})^T \lambda(x) = \natbasis_L(x)
    \end{array}
\end{equation*}
is such that $\norm{\lambda(x)}_\infty \le \Lambda$.
\end{defn}

Note that Definition~\ref{def_mfn_lambda_poised} does not need to be
relaxed. We are now ready to provide the necessary bounds in terms of
the quality of the interpolation set, the Lipschitz constant and the
dimensions of the problem.

\begin{theorem}\label{teo_hessiana_lim_sub}
Suppose that Assumptions \ref{hip_f_diferenciavel} and \ref{hip_posic_inter_linear} hold. Assume that the set $\mathcal{Y} = \{y^{0}, y^{1}, \dots, y^{p} \}$ is $\Lambda$-poised in $\overline{B} (y^{0}, \delta)$ in the minimum Frobenius norm sense and $\delta_{\max} > 0$ is an upper bound for $\delta$. Then,
\begin{equation*}
    \norm{\nabla^{2}\mathrm{m}(x)} \leq \left(\kappa + \frac{L}{2} \right) \frac{4 \Lambda (p + 1) \sqrt{2(q+1)}}{c(\delta_{\max})^{2}},
\end{equation*}
where $c(\delta_{\max}) = \min \{ 1, 1 / \delta_{\max}, 1 / \delta_{\max}^{2} \}$.
\end{theorem}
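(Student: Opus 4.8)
The plan is to bound $\norm{\nabla^2 \mathrm{m}(x)}$ by going through the representation of $\mathrm{m}$ in terms of the minimum Frobenius norm Lagrange polynomials, equation~\eqref{eq_relaxed_mfn_pol}. Since $\mathrm{m}(x) = \sum_{j=1}^{p+1} \ell_j(x) \gamma_j$ and each $\ell_j$ has degree at most two, the Hessian of $\mathrm{m}$ is $\sum_{j=1}^{p+1} \gamma_j \nabla^2 \ell_j(x)$, so the triangle inequality gives $\norm{\nabla^2 \mathrm{m}(x)} \le \sum_{j=1}^{p+1} |\gamma_j| \, \norm{\nabla^2 \ell_j}$. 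The $\gamma_j$ are controlled by Assumption~\ref{hip_dif_limitada}: since $\mathrm{m}$ approximately interpolates and, in particular, $\gamma_j$ plays the role of a perturbed function value, one expects $|\gamma_j - f(y^j)| \le \kappa \delta^2$ (and a similar Lipschitz-type bound will bring in $L/2$). The cleaner route is probably to first establish the exact-interpolation version of the Hessian bound for a generic quadratic $v^T\natbasis$ with $v$ the MFN-Lagrange coefficient vector $\alpha_j$, and then assemble.

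First I would relate the $\Lambda$-poisedness from Definition~\ref{def_mfn_lambda_poised} (which is phrased via the solution $\lambda(x)$ of a least-squares problem over $\overline{B}(y^0,\delta)$) to a bound on the coefficient vectors $\alpha_j$ of the MFN-Lagrange polynomials. The KKT systems of Definition~\ref{def_mfn_lag_poly} and Definition~\ref{def_mfn_lambda_poised} are transposes/duals of one another, so $\norm{\lambda(x)}_\infty \le \Lambda$ for all $x$ translates into a bound on $\max_{x \in \overline{B}(y^0,\delta)} |\ell_j(x)|$ of the form $\le \Lambda$ (this is the analogue, in the MFN sense, of the usual fact that $\Lambda$-poisedness bounds the Lagrange polynomials uniformly on the ball). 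Next, using Lemma~\ref{lema_aux_01} in the form $\max_{x \in \overline{B}(0,1)} |v^T\natbasis(x)| \ge \tfrac14 \norm{v}_\infty$ for quadratics (combined with the scaling by $\delta$ that converts $\overline{B}(y^0,\delta)$ to the unit ball, which is exactly where the $c(\delta_{\max})$ factor and the $\delta_{\max}$ powers enter — the monomials of degrees $0,1,2$ scale by $1, 1/\delta, 1/\delta^2$, so one pays $1/c(\delta_{\max})^2$), I would bound $\norm{\alpha_j}_\infty$, hence $\norm{\alpha_j}$, hence $\norm{\nabla^2 \ell_j}$ (the Hessian entries of a quadratic in the natural basis are the $\natbasis_Q$-coefficients, up to a factor $2$), by something like $4\Lambda/c(\delta_{\max})^2$ times a dimensional factor $\sqrt{2(q+1)}$ coming from passing between $\ell^\infty$ and $\ell^2$ norms on the $q+1$ monomial coefficients.

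Then I would combine: summing over $j = 1, \dots, p+1$ and using $|\gamma_j| \lesssim (\kappa + L/2)\delta^2$ (after subtracting off $f(y^0)$ or otherwise centering, so that the bound is genuinely $O(\delta^2)$ rather than $O(1)$) yields $\norm{\nabla^2 \mathrm{m}(x)} \le (p+1) \cdot (\kappa + L/2)\delta^2 \cdot \frac{4\Lambda\sqrt{2(q+1)}}{c(\delta_{\max})^2 \delta^2}$, and the $\delta^2$ cancels, giving exactly the stated bound. The main obstacle I anticipate is the first step — correctly matching the dual least-squares formulation of $\Lambda$-poisedness in Definition~\ref{def_mfn_lambda_poised} with the MFN-Lagrange coefficient bound, and tracking the scaling between $\overline{B}(y^0,\delta)$ and the unit ball consistently so that the $\natbasis_L$ versus $\natbasis_Q$ blocks get the correct powers of $\delta$ (and hence the correct $c(\delta_{\max})$ exponent). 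The Hessian-to-coefficient and $\ell^\infty$-to-$\ell^2$ conversions, and the bound on $\gamma_j$, should be routine once that is set up.
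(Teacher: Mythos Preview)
Your proposal is correct and follows essentially the same route as the paper: decompose $\mathrm{m}$ via the minimum Frobenius norm Lagrange polynomials, bound $\norm{\nabla^{2}\ell_{j}}$ by $4\Lambda\sqrt{2(q+1)}/(\delta^{2}c(\delta_{\max})^{2})$ using $\Lambda$-poisedness together with Lemma~\ref{lema_aux_01} (the paper defers the details of this step to \cite[Theorem~5.7]{CSV2009}), bound $|\gamma_{j}|\le(\kappa+L/2)\delta^{2}$ after centering, and combine. One small sharpening: the centering must subtract the full first-order Taylor expansion $f(y^{0})+\nabla f(y^{0})^{T}(x-y^{0})$, not just $f(y^{0})$, since only then is $|f(y^{j})|\le (L/2)\delta^{2}$; the paper also checks that this affine shift preserves both Assumption~\ref{hip_dif_limitada} and the model's Hessian.
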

\begin{proof}
This proof follows the proof of~\cite[Theorem 5.7]{CSV2009}. Assume, without loss of generality, that $y^{0} = 0$. By Lemma \ref{lema_aux_01}, the definition of $\Lambda$-poisedness and arguments very similar to~\cite[Theorem 5.7]{CSV2009} we have that
\begin{equation} \label{teo_hessiana_lim}
    \norm{\nabla^{2} \ell_{j}(x)} \leq \sqrt{2(q+1)}\frac{4\Lambda}{\delta^{2}c(\delta_{\max})^{2}},\quad j = 0, \dots, p,
\end{equation}
where $\delta_{\max}$ and $c(\delta_{\max})$ were defined by the theorem.
  
Now, let us consider the function 
\begin{equation*}
    \hat{f}(x) = f(x) - f(y^{0}) - \nabla f(y^{0})^{T}(x - y^{0}).
\end{equation*}
Note that $\hat{f}(y^{0}) = 0$, $\nabla \hat{f}(y^{0}) = 0 $ and the Hessian remains unchanged. In addition, if $\mathrm{m}(x)$ is a relaxed minimum Frobenius norm model for $f$ over the set $\mathcal{Y}$, then it is easy to see that
\begin{equation*}
    \hat{\mathrm{m}}(x) = \mathrm{m}(x) - f(y^{0}) - \nabla f(y^{0})^{T}(x - y^{0})
\end{equation*}
also satisfies Assumption~\ref{hip_dif_limitada} for $\hat f$ and the points in $\mathcal{Y}$.  Therefore, we can assume without loss of generality that $f(y^{0}) = 0$ and $\nabla f(y^{0}) = 0$. Thus, we have that $\abs{ f(x) } = \abs{ f(x) - f(y^{0}) - \nabla f(y^{0})^{T}(x - y^{0}) } \leq (L/2) \delta^{2} $, for all $x \in B(y^{0}, \delta)$ and, by~\eqref{eq_relaxed_mfn_pol}, we can prove that
\begin{equation} \label{teo_hessiana_lim_sub_eq05}
    \abs{\gamma_j} \le (\kappa + L / 2) \delta^2, \quad j = 0, \dots, p.
\end{equation}

By using the definition of $\mathrm{m}(x)$ in~\eqref{eq_relaxed_mfn_pol} and bounds~(\ref{teo_hessiana_lim}) and~(\ref{teo_hessiana_lim_sub_eq05}), we finally get that
\begin{equation*}
    \begin{split}
        \norm{\mathbf{H}} & = \norm{\nabla^{2} \mathrm{m}(x)} = \norm{\sum_{j = 0}^{p} \gamma_j \nabla^{2} \ell_{j}(x) } \leq \sum_{j = 0}^{p} \abs{\gamma_j} \norm{\nabla^{2} \ell_{j}(x)} \\
        & \le \left(\kappa + \frac{L}{2} \right) \frac{4  \Lambda (p + 1) \sqrt{2(q+1)}}{c(\delta_{\max})^{2}},
    \end{split}
\end{equation*}
and the proof is complete.
\end{proof}

\begin{theorem}\label{lema_lim_matriz_quad_sub}
  If $\mathcal{Y} = \{y^{0}, y^{1}, \dots, y^{p}\}$ is a
  $\Lambda$-poised set in $\overline{B}(y ^{0}, \delta)$ in the sense
  of the minimum Frobenius norm, then
  \begin{equation}
    \norm{\hat{\mathbf{L}}_{s}^{\dagger}} \leq \Lambda
    \sqrt{2(n + 1)} (p + 1).    
  \end{equation}
\end{theorem}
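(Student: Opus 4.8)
The plan is to reduce the estimate on $\norm{\hat{\mathbf{L}}_{s}^{\dagger}}$ to a solvability statement for the transposed system. First, since $\mathcal{Y}$ is poised in the minimum Frobenius norm sense, the remark following Assumption~\ref{hip_posic_inter_linear} shows that $\mathbf{L}_{s}$, and hence $\hat{\mathbf{L}}_{s}$, has full column rank; thus $\hat{\mathbf{L}}_{s}^{\dagger} = (\hat{\mathbf{L}}_{s}^{T}\hat{\mathbf{L}}_{s})^{-1}\hat{\mathbf{L}}_{s}^{T}$, with $(\hat{\mathbf{L}}_{s}^{\dagger})^{T} = (\hat{\mathbf{L}}_{s}^{T})^{\dagger}$ and $\norm{(\hat{\mathbf{L}}_{s}^{\dagger})^{T}} = \norm{\hat{\mathbf{L}}_{s}^{\dagger}}$, while $\hat{\mathbf{L}}_{s}^{T}$ has full row rank, so $\hat{\mathbf{L}}_{s}^{T}\mu = w$ is consistent for every $w \in \R^{n}$ and $(\hat{\mathbf{L}}_{s}^{T})^{\dagger}w$ is its minimum-norm solution. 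It therefore suffices to bound, uniformly over unit vectors $w \in \R^{n}$, the norm of some solution $\mu$ of $\hat{\mathbf{L}}_{s}^{T}\mu = w$.

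The core of the proof is to manufacture such a $\mu$ from the equality constraint in Definition~\ref{def_mfn_lambda_poised}. Given a unit vector $w$, put $x = y^{0} + \delta w \in \overline{B}(y^{0},\delta)$ and let $\lambda(x) \in \R^{p+1}$ be the corresponding solution of the quadratic program in Definition~\ref{def_mfn_lambda_poised}; it is well defined by the poisedness hypothesis (which makes the constraint feasible and the reduced objective coercive) and satisfies $\norm{\lambda(x)}_{\infty} \le \Lambda$ by assumption. Because the $i$-th column of $M(\natbasis_{L},\mathcal{Y})^{T}$ is $\natbasis_{L}(y^{i}) = (1,(y^{i})^{T})^{T}$, the constraint $M(\natbasis_{L},\mathcal{Y})^{T}\lambda(x) = \natbasis_{L}(x)$ is precisely $\sum_{i=0}^{p}\lambda_{i}(x)\natbasis_{L}(y^{i}) = \natbasis_{L}(x)$; reading off the first coordinate gives $\sum_{i=0}^{p}\lambda_{i}(x) = 1$, and the remaining $n$ coordinates give $\sum_{i=0}^{p}\lambda_{i}(x)y^{i} = x$. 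Subtracting $y^{0}$ times the first relation from the second yields $x - y^{0} = \sum_{i=1}^{p}\lambda_{i}(x)(y^{i}-y^{0}) = \mathbf{L}_{s}^{T}\mu$ with $\mu = (\lambda_{1}(x),\dots,\lambda_{p}(x)) \in \R^{p}$; dividing by $\delta$, $w = \hat{\mathbf{L}}_{s}^{T}\mu$ and $\norm{\mu} \le \sqrt{p}\,\norm{\mu}_{\infty} \le \sqrt{p}\,\Lambda$.

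Combining the two steps, $\norm{(\hat{\mathbf{L}}_{s}^{T})^{\dagger}w} \le \norm{\mu} \le \sqrt{p}\,\Lambda$ for every unit $w$, whence $\norm{\hat{\mathbf{L}}_{s}^{\dagger}} = \norm{(\hat{\mathbf{L}}_{s}^{T})^{\dagger}} \le \sqrt{p}\,\Lambda$; since $\sqrt{p} \le p+1 \le \sqrt{2(n+1)}(p+1)$ when $p \ge n+1 \ge 2$, this already gives the stated inequality, and in fact a sharper one. I do not expect a serious obstacle here: the delicate points are only of a bookkeeping nature — the pseudo-inverse identities and the consistency/minimum-norm facts invoked in the first step, and the observation that the $0$-th component of $\lambda(x)$ disappears from the sum because $y^{0}-y^{0}=0$, so that $\mu$ carries $p$ entries and not $p+1$. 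Alternatively, staying closer to the argument behind Theorem~\ref{teo_hessiana_lim_sub}, one can route the estimate through the scaled linear-regression matrix $M(\natbasis_{L},\mathcal{Y})$, bounding $\norm{\hat{\mathbf{L}}_{s}^{\dagger}}$ by the norm of its pseudo-inverse and appealing to Lemma~\ref{lema_aux_01} and Remark~\ref{obs_01}; it is along that longer path — through the norm-equivalence passages and the extra constant column of $M(\natbasis_{L},\mathcal{Y})$ — that the weaker constant $\Lambda\sqrt{2(n+1)}(p+1)$ naturally surfaces.
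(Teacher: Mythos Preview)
Your argument is correct and in fact tighter than the paper's: by reading off the linear constraint $M(\natbasis_{L},\mathcal{Y})^{T}\lambda(x)=\natbasis_{L}(x)$ directly, eliminating the affine row $\sum_i\lambda_i(x)=1$, and dropping the $i=0$ component, you produce a solution $\mu\in\R^{p}$ of $\hat{\mathbf{L}}_{s}^{T}\mu=w$ with $\norm{\mu}\le\sqrt{p}\,\Lambda$, and the minimum-norm property of the pseudo\-inverse finishes the job. The paper instead keeps the full $(p+1)\times(n+1)$ block $\hat{\mathbf{M}}_L=M(\natbasis_L,\hat{\mathcal{Y}})$, forms $\mathbf{Q}=\mathbf{E}\hat{\mathbf{M}}_L=\mathrm{diag}(1,\hat{\mathbf{L}}_s)$, bounds $\norm{\mathbf{Q}^{\dagger T}\natbasis_L(x)}$ by $\Lambda\sqrt{2}(p+1)$ via $\norm{\mathbf{E}^{-T}\lambda(x)}$, and then invokes an SVD argument together with Lemma~\ref{lema_aux_01}/Remark~\ref{obs_01} to pass from $\natbasis_L(\bar x)$ to the operator norm, picking up the extra factor $\sqrt{n+1}$. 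Your route is shorter, avoids Lemma~\ref{lema_aux_01} and the SVD step entirely, and yields the sharper constant $\sqrt{p}\,\Lambda$; the paper's route is closer in spirit to the determined-case machinery of~\cite[Lemma~3.13]{CSV2009} and to Lemma~\ref{lema_08}, which may explain why it was chosen. You correctly anticipate this longer path in your final paragraph; it is precisely what the paper does.
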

\begin{proof}
It is known that $\Lambda$-poisedness does not depend on the scale of the sample set and it is invariant with respect to shifts~\cite{CSV2009}. Therefore, let us consider the set
\begin{equation*}
    \hat{\mathcal{Y}} = \bigg\{0, \frac{y^{1} - y^{0}}{\delta}, \dots, \frac{y^{p} - y^{0}}{\delta} \bigg\},
\end{equation*}
which is $\Lambda$-poised in $\overline{B}(0, 1)$, and the matrices given by
\begin{equation*}
    \hat{\mathbf{M}}_{L} = \mathbf{M}(\overline{\phi}_{L}, \hat{\mathcal{Y}}) = 
    \left[\begin{array}{cc}
         1 & 0 \\
         e & \hat{\mathbf{L}}_{s}
    \end{array}\right],
    \mathbf{E} = 
    \left[\begin{array}{cc}
         1 & 0 \\
         -e & \mathbf{Id}
    \end{array}\right],
    \mathbf{E}^{-1} = 
    \left[\begin{array}{cc}
         1 & 0 \\
         e & \mathbf{Id}
    \end{array}\right],  \text{ and }
\end{equation*}
\begin{equation*}
    \mathbf{Q} = \mathbf{E} \hat{\mathbf{M}}_{L} = 
    \left[\begin{array}{cc}
         1 & 0 \\
         0 & \hat{\mathbf{L}}_{s}
    \end{array}\right],
\end{equation*}
where $\hat{\mathbf{M}}_{L} \in \mathbb{R}^{(p + 1) \times (n + 1)}$, $\mathbf{E} \in \mathbb{ R}^{(p + 1) \times (p + 1)}$, $\mathbf{E}^{-1} \in \mathbb{R}^{(p + 1) \times (p + 1 )}$, $\mathbf{Q} \in \mathbb{R}^{(p + 1) \times (n + 1)}$, $e = [1, 1, \dots, 1]^{T } \in \mathbb{R}^{p}$, $\mathbf{Id} \in \mathbb{R}^{p \times p}$ is the identity matrix and $\hat{\mathbf{L}} _{s}$ was defined in (\ref{def_matriz_quad_sub_escalonada}). Note that the Moore-Penrose pseudoinverse of $\mathbf{Q}$ is given by
\begin{equation*}
    \mathbf{Q}^{\dagger} = 
        \left[\begin{array}{cc}
            1                  & 0 \\
            0 & \hat{\mathbf{L}}_{s}^{\dagger}
        \end{array}\right],
\end{equation*}
where $\mathbf{Q}^{\dagger} \in \mathbb{R}^{(n + 1) \times (p + 1)}$. Thus,
\begin{equation}\label{lema_lim_matriz_quad_sub_eq_01}
    \norm{\mathbf{Q}^{\dagger}} = \max \Big\{1, \norm{\hat{\mathbf{L}}_{s}^{\dagger}} \Big\} \geq \norm{\hat{\mathbf{L}}_{s}^{\dagger}}.
\end{equation}

By Definition~\ref{def_mfn_lambda_poised}, for every $x \in \overline {B}(0, 1) \subset \mathbb{R}^{n}$ there exists $\lambda(x) \in \mathbb{R}^{p+1}$, with $\norm{\lambda( x)}_{\infty} \leq \Lambda$, such that $\hat{\mathbf{M}}_{L}^{T} \lambda(x) = \overline{\phi}_{L} (x)$. Since $\mathbf{E}$ is a non-singular matrix, we have that for each $x \in \overline{B}(0, 1)$, 
\begin{equation}\label{lema_lim_matriz_quad_sub_eq_02}
    \begin{aligned}
        \hat{\mathbf{M}}_{L}^{T} \lambda(x) = \overline{\phi}_{L}(x) 
        &\Longleftrightarrow \hat{\mathbf{M}}_{L}^{T} \big( \mathbf{E}^{T} \mathbf{E}^{-T} \big) \lambda(x) = \overline{\phi}_{L}(x) \\
        &\Longleftrightarrow \big( \hat{\mathbf{M}}_{L}^{T} \mathbf{E}^{T} \big) \big(\mathbf{E}^{-T}  \lambda(x) \big) = \overline{\phi}_{L}(x) \\
        &\Longleftrightarrow \big( \mathbf{E} \hat{\mathbf{M}}_{L} \big)^{T} \big( \mathbf{E}^{-T} \lambda(x) \big) = \overline{\phi}_{L}(x) \\
        &\Longleftrightarrow \mathbf{Q}^{T} \big( \mathbf{E}^{-T} \lambda(x) \big) = \overline{\phi}_{L}(x) \\
        &\Longleftrightarrow \mathbf{E}^{-T} \lambda(x) = \mathbf{Q}^{\dagger^{T}} \overline{\phi}_{L}(x).
    \end{aligned}
\end{equation}
Note that $\norm{\lambda(x)}_{\infty} \leq \Lambda $ and 
$
    \mathbf{E}^{-T} \lambda(x) = \Bigg[ \sum_{j = 0}^{p} [\lambda(x)]_{j}, \lambda(x)^T \Bigg]^{T}
$.
Thus, from (\ref{lema_lim_matriz_quad_sub_eq_02}), we get
\begin{equation}\label{lema_lim_matriz_quad_sub_eq_03}
    \begin{aligned}
        \norm{\mathbf{Q}^{\dagger^{T}} \overline{\phi}_{L}(x)} 
        &= \norm{\mathbf{E}^{-T} \lambda(x)}
        = \sqrt{ \Bigg( \sum_{j = 0}^{p} [\lambda(x)]_{j} \Bigg)^{2} + \sum_{j = 1}^{p} [\lambda(x)]_{j}^{2} } \\
        &\leq \sqrt{ \Bigg( \sum_{j = 0}^{p} \abs{[\lambda(x)]_{j}} \Bigg)^{2} + \sum_{j = 1}^{p} \abs{[\lambda(x)]_{j}}^{2} } \\
        &\leq \sqrt{\big( (p + 1)\Lambda \big)^{2} + p \Lambda^{2}}
        < \sqrt{ 2( p + 1 )^{2} \Lambda^{2} } \\
        &= \Lambda \sqrt{2} (p + 1).
    \end{aligned}
\end{equation}

Let $\overline{v} \in \mathbb{R}^{n+1}$ be a singular right unit vector of $\mathbf{Q}^{\dagger^{T}}$ associated with the largest singular value $\sigma_{1}$, and $\overline{x} \in \mathbb{R}^{n}$ the maximizer of $\abs{\overline{v}^{T}\overline{\phi}_{L }(x)}$ in $\overline{B}(0, 1)$. Using the SVD decomposition and the fact that $\norm{\mathbf{Q}^\dagger} = \sigma_1$ it is not hard to show (see~\cite[Lemma 3.13]{CSV2009}) that
\begin{equation} \label{pseudoinversa_bound}
     \norm{\mathbf{Q}^{\dagger^{T}} \overline{\phi}_{L}(\overline{x})} \ge |\overline{v}^T \overline{ \phi}_L(\overline{x})| \norm{\mathbf{Q}^{\dagger^{T}}}.
\end{equation}
Then, from Lemma \ref{lema_aux_01}, Remark \ref{obs_01}, (\ref{lema_lim_matriz_quad_sub_eq_03}), and~\eqref{pseudoinversa_bound},
\begin{equation*}
    \begin{aligned}
        \Lambda \sqrt{2} (p + 1) 
        &\geq \norm{\mathbf{Q}^{\dagger^{T}} \overline{\phi}_{L}(\overline{x})}
        \geq \abs{\overline{v}^{T}\overline{\phi}_{L}(\overline{x})} \norm{\mathbf{Q}^{\dagger^{T}}} \\
        &= \abs{\overline{v}^{T}\overline{\phi}_{L}(\overline{x})} \norm{\mathbf{Q}^{\dagger}} 
        = \max_{x \in \overline{B}(0, 1)} \abs{\overline{v}^{T}\overline{\phi}_{L}(x)} \norm{\mathbf{Q}^{\dagger}} \\ &\geq \frac{1}{\sqrt{n+1}} \norm{\mathbf{Q}^{\dagger}}.
    \end{aligned}
\end{equation*}
Thus, from the previous inequality and from (\ref{lema_lim_matriz_quad_sub_eq_01}), it follows that 
\begin{equation*}
    \norm{\hat{\mathbf{L}}_{s}^{\dagger}} \leq \norm{\mathbf{Q}^{\dagger}} \leq \Lambda \sqrt{2 (n + 1)} (p + 1),
\end{equation*}
 and we complete the proof.
\end{proof}

\section{Conclusions}\label{sec:conclusions}

In this article, we organized several results from the literature about error bounds for linear and quadratic models related to derivative-free trust-region algorithms. We also extended the results relaxed results of~\cite{VKPS2017} to underdetermined models and provided a clearer proof than~\cite{CSV2009} for the bound on $\norm{\mathbf{L_s}^{\dagger}}$ in the minimum Frobenius norm case. Table~\ref{table_error_bounds} provides a compilation of the results presented here and is useful for future works in worst case complexity.

Future work may include the estimation of bounds for the Hessian of minimum norm underdetermined quadratic models. In \cite{CSV2008}, the authors obtain bounds for the projection of errors onto a specific linear subspace, which is not very useful in practical terms. On the other hand, \cite[p. 79]{CSV2009}, suggests that by using an overall poisedness constant for the sample set, it is possible to establish bounds for the model's Hessian. Another interesting question is whether models constructed by support vector regression~\cite{VKPS2017} using underdetermined quadratic polynomials are able to satisfy Assumption~\ref{hip_dif_limitada} and their practical benefits under noisy blackbox functions.

\begin{sidewaystable}
  \centering
    \begin{tabular}{llll}
    \toprule
    {\bf Model Type} & {\bf Error} & {\bf Error bound} & Ref. \\ \midrule
    Linear determined  & $\abs{\mathrm{m}(x) - f(x)}$ & $\left( \frac{1}{2} L + \kappa + \left( \frac{1}{2} L + 2 \kappa \right) \Lambda n \right) \delta^{2}$ & \cite{VKPS2017} \\
    & $\norm{\nabla \mathrm{m}(x) - \nabla f(x)}$ & $\left(L +  \left( \frac{1}{2} L + 2 \kappa \right) \Lambda n \right) \delta$ & \cite{VKPS2017} \\
    & $\norm{\nabla^2 \mathrm{m}(x)}$ & $0$ & \cite{VKPS2017} \\ \midrule
    Quadratic determined  & $\abs{\mathrm{m}(x) - f(x)}$ & $\left(\frac{1}{2} L + \kappa + 4 \Lambda \sqrt{q(q+1)^{3}} \left(2 + 3\sqrt{2} \right) (\kappa + L) \right) \delta^{2}$ & \cite{VKPS2017} \\
    & $\norm{\nabla \mathrm{m}(x) - \nabla f(x)}$ & $\left(8 \Lambda \sqrt{q(q+1)^{3}} \left(1 + \sqrt{2} \right) (\kappa + L) \right) \delta$ & \cite{VKPS2017} \\
    & $\norm{\nabla^2 \mathrm{m}(x)}$ & $8 \Lambda \sqrt{2q(q+1)^{3}} (\kappa + L)$ & \cite{VKPS2017} \\ \midrule
    Quadratic underdetermined  & $\abs{\mathrm{m}(x) - f(x)}$ & $\left(\frac{1}{2} \left( L + \kappa_{H} \right) + \kappa +  2\kappa_{s} \sqrt{p} \left(L + \kappa + \frac{3}{4} \kappa_{H} \right)\right) \delta^{2}$ &  This work \\
    & $\norm{\nabla \mathrm{m}(x) - \nabla f(x)}$ & $2 \kappa_{s} \sqrt{p}\left(L + \kappa +  \frac{3}{4}\kappa_{H} \right)\delta$ & This work \\ \midrule
      Min. Frobenius norm & $\abs{\mathrm{m}(x) - f(x)}$ & $\left(\frac{1}{2} \left(L + \left(\kappa + \frac{L}{2} \right)  \Lambda \frac{4 (p + 1)
    \sqrt{2(q+1)}}{c(\delta_{\max})^{2}} \right) + \kappa +\right.$ & This work \\
      & & $\quad + \left. 2 \Lambda \sqrt{2 p (n+1) } (p + 1) \left(L + \kappa + \left(\kappa + \frac{L}{2} \right)  \Lambda \frac{3 (p + 1)
    \sqrt{2(q+1)}}{c(\delta_{\max})^{2}} \right) \right) \delta^2$ & This work \\
    & $\norm{\nabla \mathrm{m}(x) - \nabla f(x)}$ & $2 \Lambda \sqrt{2 p (n+1) } (p + 1) \left(L + \kappa + \left(\kappa + \frac{L}{2} \right)  \Lambda \frac{3 (p + 1)
    \sqrt{2(q+1)}}{c(\delta_{\max})^{2}} \right)\delta$ & This work\\
    & $\norm{\nabla^2 \mathrm{m}(x)}$ & $\left(\kappa + \frac{L}{2} \right)  \Lambda \frac{4 (p + 1)
    \sqrt{2(q+1)}}{c(\delta_{\max})^{2}}$ & This work\\ \bottomrule
    \end{tabular}
  \caption{Error bounds for linear and quadratic interpolation models
    under Assumption~\ref{hip_dif_limitada}. In the table, $L$ is the
    Lipschitz constant associated with $\nabla f$, $\kappa$ is the
    relaxed interpolation condition, $\Lambda$ is associated with the
    geometry of the sample set, $n$ is the dimension of the domain,
    $q + 1$ is the dimension of $\Pol{a}{n}$, $p + 1$ is the number of
    sample points, and $\delta \le \delta_{\max}$ are associated with
    the neighborhood of sampling points. \label{table_error_bounds}}
\end{sidewaystable}

\newpage

\bibliography{library}

\end{document}